\documentclass{amsart}
\usepackage{amssymb,latexsym,enumitem,graphics}
\usepackage{amsthm,amsmath,amssymb,amsfonts,mathrsfs}
%\pdfoutput=1

 \newtheorem{thm}{Theorem}[section]
 \newtheorem{cor}[thm]{Corollary}
 \newtheorem{lem}[thm]{Lemma}
 
 \theoremstyle{definition}
 
 \theoremstyle{remark}

 \numberwithin{equation}{section}

\theoremstyle{remark}

                                              %--------------------------------------------------------------------------
                                              %--------------------------------------------------------------------------
                                              %--------------------------------------------------------------------------
\begin{document}
\title[PRODUCTS OF COMPOSITION AND DIFFERENTIATION OPERATORS ON THE HARDY SPACE]{PRODUCTS OF COMPOSITION AND DIFFERENTIATION OPERATORS}
\author[M. Moradi and M. Fatehi]{Mahbube Moradi and Mahsa Fatehi*}

\date{}
\address{Department of Mathematics, Shiraz Branch, Islamic Azad
University, Shiraz, Iran. }
\email{mathcall2021@yahoo.com}
\address{Department of Mathematics, Shiraz Branch, Islamic Azad
University, Shiraz, Iran.}
\email{fatehimahsa@yahoo.com}

                                              %--------------------------------------------------------------------------
                                              %--------------------------------------------------------------------------
                                              %--------------------------------------------------------------------------
\begin{abstract}
We consider products of composition and differentiation operators on the Hardy space. We provide a complete characterization of boundedness and compactness of these operators. Furthermore, we obtain the explicit condition for these operators to be  Hilbert-Schmidt operators.
\end{abstract}
                                              %--------------------------------------------------------------------------
                                              %--------------------------------------------------------------------------
                                              %--------------------------------------------------------------------------
\subjclass[2010]{47B38 (Primary), 30H10, 47E99}
                                              %--------------------------------------------------------------------------
                                              %--------------------------------------------------------------------------
 % \Mathematics subject classification(2010){:47B38 (Primary), 30H10, 47E99.}                                           %--------------------------------------------------------------------------
\keywords{
Composition operator, differentiation operators, boundedness, compactness.\\
  *Corresponding author}                                            %--------------------------------------------------------------------------
                                              %--------------------------------------------------------------------------
                                              %--------------------------------------------------------------------------
\maketitle
\thispagestyle{empty}
                                              %--------------------------------------------------------------------------
                                              %--------------------------------------------------------------------------
                                              %--------------------------------------------------------------------------
\section{PRELIMINARIES}
Let  $\mathbb{D}$ be  the open unit disk in the complex plane $\Bbb{C}$.
The \textit{Hardy space}  $H^{2}$ is the Hilbert space of all analytic functions $f$ on $\mathbb{D}$ such that
\begin{align*}
\|f\|^{2}=\lim\limits_{r\rightarrow 1}\frac{1}{2\pi}\int_{0}^{2\pi}|f(re^{i\theta})|^{2}d\theta <\infty.
\end{align*}
It is well known that the Hardy space $H^{2}$ is a reproducing kernel Hilbert space, with the inner product \begin{align*}
\langle f,g\rangle=\frac{1}{2\pi}\int_{0}^{2\pi}f(e^{i\theta})\overline{g(e^{i\theta})}d\theta,
\end{align*}
and with
kernel functions $K_{w}^{(n)}(z)=\frac{n!z^{n}}{(1-\overline{w}z)^{n+1}}$, where $ n$ is a non-negative integer and $z,w \in \mathbb{D}$. These kernel functions satisfy $ \langle f, K_{w}^{(n)}\rangle=f^{(n)}(w)$ for each $f \in H^{2}$. To simplify notation we write $K_{w}$ in case $n=0$.
In particular note that  $\|K_{w}\|^{2}=K_{w}(w)=\frac{1}{1-|w|^{2}}$. Let $\hat{f}(n)$ be the $n$th coefficient of $f$ in its
Maclaurin series. Moreover, we have another representation for the norm
of $f$ on $H^{2}$ as follows
$$\|f\|^{2}=\sum_{n=0}^{\infty}|\hat{f}(n)|^{2}<\infty.$$
The space $H^{\infty}$ is the Banach space of bounded  analytic  functions $f$ on $\mathbb{D}$ with
$\|f\|_{\infty}=\sup \{|f(z)|:z\in \mathbb{D}\}$. \par
For $\varphi$ an analytic self-map of $\mathbb{D}$, the \textit{composition operator} $C_{\varphi}$  is defined for analytic functions $f$ on  $\mathbb{D}$ by $C_{\varphi}(f)= f\circ \varphi$. It is well known that every composition operator $C_{\varphi}$ is bounded on $H^{2}$ (see \cite[Corollary 3.7]{2}).
For each positive integer $k$, the operator $D^{(k)}$ for any $f \in H^{2}$ is defined by the rule $D^{(k)}(f)=f^{(k)}$. This operator is called the \textit{differentiation operator of order $k$}. For convenience, we use the notation $D$ when $k=1$.  The differentiation operators $D^{(k)}$ are unbounded on $H^{2}$, whereas Ohno \cite{13} found a characterization for $C_{\varphi}D$ and $DC_{\varphi}$ to be bounded and compact on $H^{2}$. The study of operators $C_{\varphi}D$ and $DC_{\varphi}$ was initially addressed by Hibschweiler,  Portnoy, and Ohno (see \cite{12} and \cite{13}) and has been noticed by many researchers (\cite{fh}, \cite{fh1}, and \cite{15}). In this paper, we will be considering a slightly broader class of these operators.
For each positive integer $n$, we write $D_{\varphi,n}$ to denote the operator on $H^{2}$ given by the rule $D_{\varphi,n}(f)=C_{\varphi}D^{(n)}(f)=f^{(n)}\circ\varphi$.
Our main results provide  complete characterizations of the boundedness and compactness of operators $D_{\varphi,n}$  on $H^{2}$   (Theorems \ref{thm1} and  \ref{thm2}).  In addition, we characterize the Hilbert-Schmidt operators  $D_{\varphi,n}$  on $H^{2}$ (Theorem \ref{thm3333}). In this paper, we use some ideas which are found  in \cite{13}.

%\begin{defn}
%\end{defn}

   Let $\varphi$ be an analytic self-map of $\mathbb{D}$. The \textit{Nevanlinna counting function} $N_{\varphi}$ of $\varphi$ is defined by
   \begin{align*}
   N_{\varphi}(w)=\sum_{\varphi(z)=w}\log \big(1/|z|\big)\qquad w\in \mathbb{D} \setminus \{\varphi(0)\}
   \end{align*}
   and $N_{\varphi}(\varphi(0))=\infty$. Note that $ N_{\varphi}(w)=0$ when $w$ is not in $\varphi(\mathbb{D})$.
   For each $f \in H^{2}$, by using  change of variables formula and Littlewood-Paley Identity, the norm of $C_{\varphi}f$ is determined as follows:
   \begin{equation}\label{1-1}
   \|f\circ \varphi\|^{2}=\big| f\big(\varphi (0)\big)\big|^{2}+2\int_{\mathbb{D}}|f^{\prime}(w)|^{2}N_{\varphi}(w)dA(w),
   \end{equation}
   where $dA$ is the normalized  area measure on $\mathbb{D}$ (see \cite[Theorem 2.31]{2}).
Moreover, to obtain the lower bound estimate on $\|D_{\varphi,n}\|$ we need the following well known lemma as follows (see \cite[p. 137]{2}):\\ \par
Suppose that $\varphi$ is an analytic self-map of $\mathbb{D}$ and $f$ is analytic in $\mathbb{D}$. Assume that $\Delta$ is any disk not containing $\{f^{-1}(\varphi(0))\}$ and centered at $a$. Then
\begin{equation}\label{1-2}
N_{\varphi}(f(a))\leq \frac{1}{|\Delta|}\int_{\Delta}N_{\varphi}(f(w))dA(w),
 \end{equation}
where $|\Delta|$ is the normalized area measure of $\Delta$.
  % Let $\varphi(0)\neq 0$. If $0<R<|\varphi(0)|$, then $N_{\varphi}(0)\leq \frac{1}{R^{2}}\int_{R\mathbb{D}}N_{\varphi}dA$, where $R\mathbb{D}=\{|z|<R\}$.

   %If $\varphi$ is an analytic self-map of $\mathbb{D}$, we put $\varphi^{*}(\xi)=\lim\limits_{r\rightarrow 1}\varphi(r\xi)$
   %for $\xi\in \partial\mathbb{D}$, while this limit exists and associate a measure  $\mu$ to $\varphi^{*}$ considering the
   %\begin{align*}
   %\mu(E)=\int_{(\varphi^{*})^{-1}(E)\cap \partial \mathbb{D}}\frac{d\theta}{2\pi}
  % \end{align*}
  % for $E\subset\bar{\mathbb{D}}$. On the other hand, $\mu$ is the measure on $\bar{\mathbb{D}}$ that satisfies
  % \begin{align*}
   %\int_{\bar{\mathbb{D}}}fd\mu =\int_{\partial \mathbb{D}}(f\circ \varphi^{*})\frac{d\theta}{2\pi}
   %\end{align*}
   %for measurable function $f$ on $\bar{\mathbb{D}}$.
  %Therefore for $C_{\varphi}D^{n}:H^{2}\rightarrow H^{2}$, we have
   %\begin{align*}
   %\|C_{\varphi}Df\|_{H^{2}}^{2}=\int_{0}^{2\pi}|f^{\prime}\circ \varphi^{*}(e^{i\theta})|^{p}\frac{d\theta}{2\pi}=\int_{\bar{\mathbb{D}}}|f^{\prime}|^{2}d\mu.
   %\end{align*}
  % We show $ C_{\varphi}D^{n}:H^{2}\rightarrow H^{2}$ is bounded or compact if and only if

%$D:H^{2}\rightarrow L^{2}(\bar{\mathbb{D}},d\mu)$ is bounded.\\

                                              %-----------------------------------------------------------------------
                                              %-----------------------------------------------------------------------
%\begin{center}
\section{Boundedness  and   compactness of $D_{\varphi,n}$}
The goal of this section is to determine which of these operators $D_{\varphi,n}$ are bounded and compact.

\begin{thm}\label{thm1}
Let  $\varphi$ be an analytic self-map of $\mathbb{D}$ and $n$ be a positive integer.  The operator $D_{\varphi , n}$ is bounded on  $H^{2}$  if and only if
\begin{align*}
N_{\varphi}(w)=O\bigg(\bigg[\log \big(1/ |w|\big)\bigg]^{2n+1}\bigg)\qquad \big(|w|\rightarrow 1\big).
\end{align*}
\end{thm}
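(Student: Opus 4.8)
The plan is to reduce the boundedness of $D_{\varphi,n}$ to a Carleson-type condition on the measure induced by the Nevanlinna counting function, following the template used by Ohno \cite{13} for $n=1$. The starting point is the formula \eqref{1-1}: since $D_{\varphi,n}f = f^{(n)}\circ\varphi$, we have
\begin{equation}\label{plan-key}
\|D_{\varphi,n}f\|^{2} = \big|f^{(n)}(\varphi(0))\big|^{2} + 2\int_{\mathbb{D}}\big|f^{(n+1)}(w)\big|^{2}N_{\varphi}(w)\,dA(w).
\end{equation}
The first term is always controlled since point evaluation of $f^{(n)}$ at the fixed point $\varphi(0)$ is a bounded functional on $H^{2}$ (with norm $\|K_{\varphi(0)}^{(n)}\|$). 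So $D_{\varphi,n}$ is bounded if and only if the operator $f\mapsto f^{(n+1)}$ maps $H^{2}$ boundedly into $L^{2}\big(N_{\varphi}\,dA\big)$; equivalently, writing $g = f^{(n+1)}$, one wants $\int_{\mathbb{D}}|g|^{2}N_{\varphi}\,dA \lesssim \|f\|^{2}$. The natural way to see this is to relate $\int |g|^{2}N_{\varphi}\,dA$ to a Carleson measure condition on $H^{2}$ (or on a weighted Bergman/Dirichlet-type space) and then translate the Carleson condition on $N_{\varphi}$ into the stated pointwise growth estimate $N_{\varphi}(w)=O\big([\log(1/|w|)]^{2n+1}\big)$.

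For the sufficiency direction I would argue as follows. The key analytic fact is the elementary estimate $\log(1/|w|)\asymp 1-|w|$ as $|w|\to1$, so the hypothesis says $N_{\varphi}(w)\lesssim (1-|w|)^{2n+1}$ near the boundary (and $N_{\varphi}$ is bounded on compact subsets of $\mathbb{D}\setminus\{\varphi(0)\}$ by the sub-mean-value property built into \eqref{1-2}, with the boundedness near $\varphi(0)$ handled by splitting off a fixed neighborhood whose contribution is finite for every $f$ since $f^{(n+1)}$ is bounded there). Then in \eqref{plan-key} I bound
\[
\int_{\mathbb{D}}\big|f^{(n+1)}(w)\big|^{2}N_{\varphi}(w)\,dA(w)\;\lesssim\;\int_{\mathbb{D}}\big|f^{(n+1)}(w)\big|^{2}(1-|w|^{2})^{2n+1}\,dA(w)
\]
plus a harmless local term, and the right-hand integral is $\lesssim\|f\|^{2}$ by the standard weighted-area description of $H^{2}$: for $f\in H^{2}$ one has $\int_{\mathbb{D}}|f^{(k)}(w)|^{2}(1-|w|^{2})^{2k-1}\,dA(w)\lesssim\|f\|^{2}$ (iterated Littlewood--Paley / standard power-series computation), applied with $k=n+1$. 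This gives $\|D_{\varphi,n}f\|^{2}\lesssim\|f\|^{2}$.

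For the necessity direction I would use the reproducing-kernel test functions together with the sub-mean-value inequality \eqref{1-2}. Concretely, feed $D_{\varphi,n}$ the normalized kernels $k_{w}=K_{w}/\|K_{w}\|$ (or rather their images under the adjoint, or a suitable derivative-of-kernel test function) and compute $\|D_{\varphi,n}k_{w}\|^{2}$ explicitly via \eqref{plan-key}; the derivatives $k_{w}^{(n+1)}$ concentrate near the point $w$ with the right weight, so that $\|D_{\varphi,n}k_{w}\|^{2}\gtrsim (1-|w|^{2})^{-(2n+1)}N_{\varphi}(w)$ after applying \eqref{1-2} to pull the integral average of $N_{\varphi}$ down to its value at $w$ (choosing $\Delta$ a pseudohyperbolic disk about $w$ avoiding $\varphi(0)$, legitimate once $|w|$ is close to $1$). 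Boundedness of $D_{\varphi,n}$ forces $\|D_{\varphi,n}k_{w}\|^{2}\lesssim 1$, which yields $N_{\varphi}(w)\lesssim(1-|w|^{2})^{2n+1}\asymp[\log(1/|w|)]^{2n+1}$, as desired.

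The main obstacle I anticipate is the bookkeeping in the necessity half: one must choose test functions whose $(n+1)$-st derivative is genuinely comparable, in $L^{2}(N_{\varphi}\,dA)$-sense, to a bump concentrated at $w$ with the correct power of $(1-|w|)$, and then invoke \eqref{1-2} with a disk $\Delta$ of the right (pseudohyperbolic) radius that simultaneously avoids $\varphi(0)$, captures a fixed fraction of the mass of $|k_w^{(n+1)}|^2$, and keeps $N_\varphi$ roughly constant. Getting the exponent $2n+1$ exactly right — rather than $2n-1$ or $2n+3$ — comes down to carefully tracking how differentiation $n+1$ times interacts with the weight $(1-|w|^{2})^{2(n+1)-1}$ in the area representation of $H^{2}$, i.e. the shift by one extra derivative coming from the Littlewood--Paley term in \eqref{plan-key} versus the $n$ derivatives absorbed into $D^{(n)}$. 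The sufficiency direction is comparatively routine once the weighted-area inequality for $\|f^{(n+1)}\|$ is in hand.
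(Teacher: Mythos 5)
Your proposal is correct and follows essentially the same route as the paper: the Littlewood--Paley/Nevanlinna identity \eqref{1-1} applied to $f^{(n)}\circ\varphi$, necessity via the normalized kernels $K_{w}/\|K_{w}\|$ combined with the sub-mean-value inequality \eqref{1-2} on a pseudohyperbolic disk avoiding $\varphi(0)$ (the paper implements this with the explicit change of variables $w=\alpha_{\lambda}(u)$ over $\tfrac12\mathbb{D}$), and sufficiency by splitting off a compact part and using the weighted-area estimate $\int|f^{(n+1)}|^{2}(1-|w|)^{2n+1}\,dA\lesssim\|f\|^{2}$, which the paper verifies directly by a power-series/Gamma-function computation rather than citing it as standard. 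The exponent bookkeeping you describe is exactly what the paper carries out.
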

\begin{proof}
Suppose that $D_{\varphi,n}$ is bounded on $H^{2}$. Let $f(z)=\frac{K_{\lambda}(z)}{\|K_{\lambda}\|}=\frac{\sqrt{1-|\lambda|^{2}}}{1-\overline{\lambda}z}$ for $\lambda\in\mathbb{D}$. By  (\ref{1-1}), we see that
\begin{align}\label{2-1}
\|D_{\varphi ,n}\|^{2}&\geq \| D_{\varphi ,n}f\|^{2}\notag\\
&=\bigg\| C_{\varphi}\bigg(\frac{n!\overline{\lambda}^{n}\sqrt{1-|\lambda |^{2}}}{\big(1-\overline{\lambda}z\big)^{n+1}}\bigg)\bigg\|^{2}\notag\\
&=\bigg |\frac{n!\overline{\lambda}^{n}\sqrt{1-|\lambda|^{2}}}{\big(1-\overline{\lambda} \varphi(0)\big)^{n+1}}\bigg |^{2}+2\int_{\mathbb{D}} \bigg |\frac{(n+1)!\overline{\lambda}^{n+1}\sqrt{1-|\lambda |^{2}}}{\big(1-\overline{\lambda}w\big)^{n+2}}\bigg |^{2}N_{\varphi}(w)dA(w)\notag\\
&\geq \int_{\mathbb{D}}\frac{2\big((n+1)!\big)^{2}|\lambda |^{2n+2}\big(1-|\lambda |^{2}\big)}{\big|1-\overline{\lambda}w\big|^{2n+4}}N_{\varphi}(w)dA(w).
\end{align}
Substitute
$w=\alpha_{\lambda}(u)=\frac{\lambda-u} {1-\overline{\lambda}u}$ back into  (\ref{2-1}) and using \cite[Theorem 7.26]{726} to obtain
\begin{align}\label{2-2}
\|D_{\varphi,n}\|^{2}&\geq \int_{\mathbb{D}}\frac{2\big((n+1)!\big)^{2}|\lambda |^{2n+2}\big(1-|\lambda |^{2}\big)}{\big|1-\overline{\lambda}\alpha_{\lambda}(u)\big|^{2n+4}}N_{\varphi}(\alpha_{\lambda}(u))\big|\alpha_{\lambda}^{\prime}(u)\big|^{2}dA(u).
\end{align}
Since $1-\overline{\lambda}\alpha_{\lambda}(u)=\frac{1-|\lambda|^{2}}{1-\overline{\lambda}u}$
  and
 $\alpha_{\lambda}^{\prime}(u)=\frac{|\lambda|^{2}-1}{\big(1-\overline{\lambda}u\big)^{2}}$, by substituting $\alpha_{\lambda}^{\prime}$ and $1-\overline{\lambda}\alpha_{\lambda}$ back into  (\ref{2-2}), we see that
\begin{align}\label{2-3}
\|D_{\varphi,n}\|^{2}&\geq\int_{\mathbb{D}}\frac{2\big((n+1)!\big)^{2}|\lambda |^{2n+2}|1-\overline{\lambda}u|^{2n}}{\big(1-|\lambda |^{2}\big)^{2n+1}}N_{\varphi}(\alpha_{\lambda}(u))dA(u).
\end{align}
Because $\big|1-\overline{\lambda}u\big|\geq\frac{1}{2}$ for any $u\in{\mathbb{D}}/{2}$, we get from  (\ref{2-3}) that
\begin{align}
\|D_{\varphi,n}\|^{2}\geq \int_{{\mathbb{D}}/{2}}\frac{2\big((n+1)!\big)^{2}|\lambda |^{2n+2}}{2^{2n}\big(1-|\lambda |^{2}\big)^{2n+1}}N_{\varphi}(\alpha_{\lambda}(u))dA(u).\label{eq1}
\end{align}
There exists $r<1$ such that for each $\lambda$ with $r<|\lambda|<1$, $\alpha_{\lambda}^{-1}(\varphi(0))\notin {\mathbb{D}}/{2}$ because $|\alpha_{\lambda}^{-1}(\varphi(0))|=|\alpha_{\varphi(0)}(\lambda)|$ and $\alpha_{\varphi(0)}$ is an automorphism of $\mathbb{D}$. By (\ref{1-2}) and  (\ref{eq1}), we have
 \begin{align}
 \|D_{\varphi,n}\|^{2}&\geq  \frac{2\big((n+1)!\big)^{2}|\lambda |^{2n+2}}{2^{2n}\big(1-|\lambda|^{2}\big)^{2n+1}}\int_{{\mathbb{D}}/{2}}N_{\varphi}(\alpha_{\lambda}(u))dA(u)\notag\\
&\geq \frac{2\big((n+1)!\big)^{2}|\lambda |^{2n+2}}{2^{2n}\big(1-|\lambda|^{2}\big)^{2n+1}}\cdot\frac{N_{\varphi}(\alpha_{\lambda}(0))}{4}\notag\\
&=\frac{\big((n+1)!\big)^{2}|\lambda |^{2n+2}}{2^{2n+1}\big(1-|\lambda|^{2}\big)^{2n+1}}N_{\varphi}(\lambda)\label{2.22}
 \end{align}
for each $\lambda$ with $r<|\lambda|<1$.
Since $D_{\varphi ,n}$ is bounded, there exists  a constant number $M$ so that
\begin{align}\label{eq2-5*}
\lim\limits_{|\lambda|\rightarrow1}\frac{\big((n+1)!\big)^{2}|\lambda|^{2n+2}}{2^{2n+1}\big(1-|\lambda|^{2}\big)^{2n+1}}N_{\varphi}(\lambda)\leq M.
\end{align}
We know that $\log\big({1}/{|\lambda|}\big)$ is comparable to $1-|\lambda|$ as  $|\lambda| \rightarrow 1^{-}$.
 Note that
 \begin{align}
& \lim\limits_{|\lambda |\rightarrow 1}\frac{\big((n+1)!\big)^{2}|\lambda|^{2n+2}}{2^{2n+1}\big(1-|\lambda|^{2}\big)^{2n+1}}N_{\varphi}(\lambda)\notag\\
&= \lim\limits_{|\lambda |\rightarrow 1}\frac{\big((n+1)!\big)^{2}|\lambda |^{2n+2}}{2^{2n+1}\big(1+|\lambda |\big)^{2n+1}}\bigg(\frac{\log \big({1}/{|\lambda |}\big)}{1-|\lambda |}\bigg)^{2n+1}\frac{N_{\varphi}(\lambda)}{\big(\log \big({1}/{|\lambda|}\big)\big)^{2n+1}}\notag\\
& \geq \frac{\big((n+1)!\big)^{2}}{2^{6n+4}}\lim\limits_{|\lambda |\rightarrow 1}\frac{N_{\varphi}(\lambda)}{\big(\log\big({1}/{|\lambda|}\big)\big)^{2n+1}}.\label{2-6}
 \end{align}
 By \eqref{eq2-5*} and  \eqref{2-6}, we can see that
 $$N_{\varphi}(\lambda)=O\bigg(\bigg[\log\big({1}/{|\lambda|}\big)\bigg]^{2n+1}\bigg)\qquad (|\lambda|\rightarrow 1).$$

Conversely, suppose that for  some  $R$ with $0<R<1$, there  exists a constant $M$ satisfying
\begin{align*}
\sup_{R<|w|<1}N_{\varphi}(w)/\bigg [\log\big({1}/{|w|}\big)\bigg]^{2n+1}\leq M.
\end{align*}
Let $f$ be an arbitrary function in $H^2$. It follows from (\ref{1-1}) that
\begin{align}
\| D_{\varphi ,n}f\| ^{2}&=\big |f^{(n)}\big(\varphi (0)\big)\big |^{2}+2\int_{\mathbb{D}}\big |f^{(n+1)}(w)\big |^{2}N_{\varphi}(w)dA(w)\notag\\
&=\big |f^{(n)}\big(\varphi (0)\big)\big |^{2}\notag\\
&+2\bigg(\int_{R\mathbb{D}}\big|f^{(n+1)}(w)\big |^{2}N_{\varphi}(w)dA(w)+\int_{\mathbb{D}\backslash R\mathbb{D}}\big| f^{(n+1)}(w)\big |^{2}N_{\varphi}(w)dA(w)\bigg).\label{2.2}
\end{align}
First we estimate the  first and the second terms  in the right-hand of (\ref{2.2}).  Observe that
\begin{align*}
f^{(n)}(z)=\big\langle f,K_{z}^{(n)}\big\rangle=\int_{0}^{2\pi}\frac{n!e^{-in\theta}f(e^{i\theta})}{\big(1-e^{-i\theta}z\big)^{n+1}}\frac{d\theta}{2\pi}
\end{align*}
and  hence
\begin{align}\label{222}
&\big|f^{(n)}(z)\big|\leq \frac{n!}{\big(1-|z|\big)^{n+1}}\int_{0}^{2\pi}\big|f(e^{i\theta})\big|\frac{d\theta}{2\pi}\leq \frac{n!}{\big(1-|z|\big)^{n+1}}\|f\|
\end{align}
for any $z\in \mathbb{D}$. It follows  from (\ref{222}) that
\begin{align}\label{eq2.3}
 \big|f^{(n)}\big(\varphi (0)\big)\big|\leq \frac{n!\|f\|}{\big(1-|\varphi(0)|\big)^{n+1}}.
\end{align}
Moreover,  we can see that
\begin{align}\label{eq2}
\big|f^{(n+1)}(z)\big|=\big|\big\langle f,K_{z}^{(n+1)}\big\rangle\big|=\frac{(n+1)!}{\big(1-|z|\big)^{n+2}}\|f\|
\end{align}
for any  $z\in\mathbb{D}$.
Therefore by  \eqref{eq2}, we see that
\begin{align*}
\int_{R\mathbb{D}}\big |f^{(n+1)}(w)\big |^{2}N_{\varphi}(w)dA(w)&\leq \bigg(\frac{(n+1)!}{(1-R)^{n+2}}\bigg)^{2}\|f\|^{2}\int_{R\mathbb{D}}N_{\varphi}(w)dA(w).
\end{align*}
Since $\|\varphi\|^{2}=|\varphi(0)|^{2}+2\int_{\mathbb{D}} N_{\varphi}(w)dA(w)$ by (\ref{1-1}),
we obtain
\begin{align}\label{2.11}
\int_{\mathbb{D}} N_{\varphi}(w)dA(w)=\frac{1}{2}\big(\|\varphi\|^{2}-|\varphi(0)|^{2}\big)<1.
 \end{align}
 From (\ref{eq2}) and \eqref{2.11}, we see that
\begin{align}\label{2.5}
\int_{R\mathbb{D}}\big |f^{(n+1)}(w)\big |^{2}N_{\varphi}(w)dA(w) &\leq  \bigg(\frac{(n+1)!}{(1-R)^{n+2}}\bigg)^{2}\|f\|^{2}.
\end{align}
Now we estimate the third term in the right-hand of (\ref{2.2}). We have
\begin{align}\label{eq3}
&\int_{\mathbb{D}\setminus R\mathbb{D}}\big |f^{(n+1)}(w)\big |^{2}N_{\varphi}(w)dA(w)\notag \\
&=\int_{\mathbb{D}\setminus R\mathbb{D}}\big |f^{(n+1)}(w)\big |^{2}\big(\log({1}/{|w|})\big)^{2n+1}\frac{N_{\varphi}(w)}{\big(\log({1}/{|w|})\big)^{2n+1}}dA(w)\notag\\
&\leq \sup_{R<|w|<1}\frac{N_{\varphi}(w)}{\big(\log({1}/{|w|})\big)^{2n+1}}\int_{\mathbb{D}\setminus R\mathbb{D}}\big |f^{(n+1)}(w)\big |^{2}\big(\log({1}/{|w|})\big)^{2n+1}dA(w)\notag\\
&\leq M\int_{\mathbb{D}\setminus R\mathbb{D}}\big |f^{(n+1)}(w)\big |^{2}\big(\log(1/|w|)\big)^{2n+1}dA(w).
\end{align}
Let  $f(z)=\sum_{m=0}^{\infty}a_{m}z^{m}$. %with $\|f\|^{2}=\sum_{m=0}^{\infty}|a_{m}|^{2} $.
 %We have
%\begin{align*}
  %f'(z)=\sum_{m=1}^{\infty}ma_{m}z^{m-1} , f''(z)=\sum_{m=2}^{\infty}m(m-1)a_{m}z^{m-2}
%\end{align*}
 %and
 %\begin{align*}
%f^{(n+1)}(z)=\sum_{m=n+1}^{\infty}m(m-1)(m-2)\cdots (m-n)a_{m}z^{m-(n+1)}.
 %\end{align*}
 %,$w=re^{i\theta}$ and $dA(w)=rdrd\theta$.
  We get
\begin{align}\label{4}
&\int_{\mathbb{D}\setminus R\mathbb{D}}\big |f^{(n+1)}(w)\big |^{2}\big(\log ({1}/{|w|})\big)^{2n+1}dA(w) \notag \\
&\leq\int_{\mathbb{D}\setminus R\mathbb{D}}\bigg |\sum_{m=n+1}^{\infty}m(m-1)...(m-n)a_{m}(w)^{m-(n+1)}\bigg|^{2}\big(\log ({1}/{|w|})\big)^{2n+1}dA(w)\notag\\
&\leq \sum_{m=n+1}^{\infty}m^{2}(m-1)^{2}...(m-n)^{2}|a_{m}|^{2}\int_{\mathbb{D}\setminus R\mathbb{D}}\bigg |(w)^{m-(n+1)}\bigg |^{2}\big(\log ({1}/{|w|})\big)^{2n+1}dA(w)\notag\\
&\leq \sum_{m=n+1}^{\infty}m^{2}(m-1)^{2}...(m-n)^{2}|a_{m}|^{2}\int_{\mathbb{D}}\bigg |(w)^{m-(n+1)}\bigg |^{2}\big(\log ({1}/{|w|})\big)^{2n+1}dA(w)\notag\\
&=\sum_{m=n+1}^{\infty}m^{2}(m-1)^{2}...(m-n)^{2}|a_{m}|^{2}\int_{0}^{1}\int_{0}^{2\pi}|re^{i\theta}|^{2(m-(n+1))}\big(\log({1}/{r})\big)^{2n+1}rdr\frac{d\theta}{\pi}\notag\\
&\leq\sum_{m=n+1}^{\infty}m^{2}(m-1)^{2}...(m-n)^{2}|a_{m}|^{2}\int_{0}^{1}(r)^{2(m-(n+1))}\big(\log({1}/{r})\big)^{2n+1}2rdr.\notag\\
\end{align}
Now substitute $t=r^{2}$ and $u=\log ({1}/{t})$  to obtain  %where  $du=-\frac{1}{t}dt$ and $t=e^{-u}$. Since  $dt=-e^{-u}du$, we have
\begin{align}\label{eq2.152}
\int_{0}^{1}(r)^{2(m-(n+1))}\big(\log({1}/{r})\big)^{2n+1}2rdr &=\int_{0}^{1}t^{(m-(n+1))}\bigg(\frac{1}{2}\log ({1}/{t})\bigg)^{2n+1}dt\notag\\
%&\qquad =(\frac{1}{2})^{2n+1}\int_{0}^{\infty}e^{-u(m-(n+1))}u^{2n+1}e^{-u}du
& =({1}/{2})^{2n+1}\int_{0}^{\infty}e^{-u(m-n)}u^{2n+1}du.
\end{align}
By substituting $x=(m-n)u$ back into \eqref{eq2.152}, we have
% since $du=\frac{dx}{m-n}$, we observe that
\begin{align}
({1}/{2})^{2n+1}\int_{0}^{\infty}e^{-u(m-n)}u^{2n+1}du &=\frac{1}{2^{2n+1}(m-n)^{2n+2}}\int_{0}^{\infty}e^{-x} x^{2n+1}dx\notag\\
&=\frac{\Gamma(2n+2)}{2^{2n+1}(m-n)^{2n+2}}.\label{eq5}
\end{align}
By  \eqref{eq3}, \eqref{4}, \eqref{eq2.152} and \eqref{eq5}, we can see that

%We obtain
\begin{align}
\int_{\mathbb{D}\setminus R\mathbb{D}}\big |f^{(n+1)}(w)\big |^{2}N_{\varphi}(w)dA(w) &\leq M\sum_{m=n+1}^{\infty}m^{2}(m-1)^{2}...(m-n)^{2}|a_{m}|^{2}\frac{\Gamma (2n+2)}{2^{2n+1}(m-n)^{2n+2}}\notag\\
%&\qquad =M\frac{(2n+2)!}{2^{n+1}}\sum_{m=n+1}^{\infty}\frac{m^{2}(m-1)^{2}\cdots (m-n)^{2}}{(m-n)^{2n+2}}|a_{m}|^{2}\notag \\
&=M\frac{(2n+1)!}{2^{2n+1}}\sum_{m=n+1}^{\infty}\frac{m^{2}(m-1)^{2}...(m-n+1)^{2}}{(m-n)^{2n}}|a_{m}|^{2}\notag\\
&\leq M\lambda\frac{(2n+1)!}{2^{2n+1}}\sum_{m=n+1}^{\infty}|a_{m}|^{2}\notag\\
&\leq M\lambda\frac{(2n+1)!}{2^{2n+1}}\|f\|^{2},\label{eq6}
\end{align}
where $\lambda$ is a constant  so that $\frac{m^{2}(m-1)^{2}...(m-n+1)^{2}}{(m-n)^{2n}}\leq \lambda$ for each $m\geq n+1$
(note that the function  $f(x)=\frac{x^{2}(x-1)^{2}...(x-n+1)^{2}}{(x-n)^{2n}}$ is  bounded on $[n+1,+\infty)$).
%\end{align*}\big)
%for  $\alpha=M\frac{(2n+2)!}{2^{n+1}}((n+1)!)^{2}$.\\
Then  \eqref{2.2}, \eqref{eq2.3}, \eqref{2.5} and \eqref{eq6} show that  %$\|D_{\varphi}f\|^{2}\leq C\|f\|^{2}$, when  $C$ is constant. Therefore
  $D_{\varphi,n}$ is bounded.
\end{proof}

\begin{thm}\label{thm2}
Let $\varphi$ be an analytic self-map  of $\mathbb{D}$ and  $n$ be a positive   integer.  The operator  $D_{\varphi, n}$  is compact  on  $H^{2}$ if  and only if
\begin{align}\label{eq19}
N_{\varphi}(w)=o\bigg(\bigg[\log\big(1/|w|\big)\bigg]^{2n+1}\bigg)\qquad (|w|\rightarrow 1).
\end{align}
\end{thm}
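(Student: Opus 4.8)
The plan is to reduce everything to the estimates already established in the proof of Theorem~\ref{thm1}, together with the standard criterion that a bounded operator $T$ on $H^{2}$ is compact if and only if $\|Tf_{j}\|\to 0$ for every sequence $(f_{j})$ that is bounded in $H^{2}$ and converges to $0$ uniformly on compact subsets of $\mathbb{D}$. First I would observe that \eqref{eq19} implies the big-$O$ condition of Theorem~\ref{thm1}, so $D_{\varphi,n}$ is automatically bounded and the question of its compactness is meaningful.

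For the necessity of \eqref{eq19}, I would feed $D_{\varphi,n}$ the normalized reproducing kernels $f_{\lambda}(z)=K_{\lambda}(z)/\|K_{\lambda}\|=\sqrt{1-|\lambda|^{2}}/(1-\overline{\lambda}z)$, which satisfy $\|f_{\lambda}\|=1$ and $f_{\lambda}\to 0$ weakly as $|\lambda|\to 1$ (boundedness plus pointwise convergence, checked on polynomials). Compactness of $D_{\varphi,n}$ then forces $\|D_{\varphi,n}f_{\lambda}\|\to 0$. On the other hand, the chain of inequalities \eqref{2-1}--\eqref{2.22} in the proof of Theorem~\ref{thm1} gives, for $r<|\lambda|<1$,
\[
\|D_{\varphi,n}f_{\lambda}\|^{2}\ \geq\ \frac{\big((n+1)!\big)^{2}|\lambda|^{2n+2}}{2^{2n+1}\big(1-|\lambda|^{2}\big)^{2n+1}}\,N_{\varphi}(\lambda),
\]
and the comparison computation \eqref{2-6}, using that $\log(1/|\lambda|)$ is comparable to $1-|\lambda|$, bounds the right-hand side below by a positive absolute constant times $N_{\varphi}(\lambda)/\big[\log(1/|\lambda|)\big]^{2n+1}$. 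Letting $|\lambda|\to 1$ yields \eqref{eq19}.

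For the sufficiency, I would take $(f_{j})$ bounded in $H^{2}$, say $\|f_{j}\|\le 1$, with $f_{j}\to 0$ uniformly on compacta; fix $\varepsilon>0$ and use \eqref{eq19} to choose $R\in(0,1)$ with $\sup_{R<|w|<1}N_{\varphi}(w)/\big[\log(1/|w|)\big]^{2n+1}<\varepsilon$. Then I would split $\|D_{\varphi,n}f_{j}\|^{2}$ exactly as in \eqref{2.2} into the point term $|f_{j}^{(n)}(\varphi(0))|^{2}$, the integral over $R\mathbb{D}$, and the integral over $\mathbb{D}\setminus R\mathbb{D}$. The tail integral is bounded by repeating \eqref{eq3}--\eqref{eq6} with $M$ replaced by $\varepsilon$, which gives a bound $\varepsilon\,C_{n}$ with $C_{n}$ an absolute constant (it comes only from the Gamma integral and the elementary bound on $m^{2}(m-1)^{2}\cdots(m-n+1)^{2}/(m-n)^{2n}$). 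The point term tends to $0$ because $f_{j}^{(n)}(\varphi(0))\to 0$ by the Cauchy estimates on a fixed disk around $\varphi(0)$, and the integral over $R\mathbb{D}$ is at most $2\big(\sup_{R\mathbb{D}}|f_{j}^{(n+1)}|\big)^{2}\int_{\mathbb{D}}N_{\varphi}\,dA$, which tends to $0$ since $f_{j}^{(n+1)}\to 0$ uniformly on $R\mathbb{D}$ and $\int_{\mathbb{D}}N_{\varphi}\,dA<1$ by \eqref{2.11}. Hence $\limsup_{j}\|D_{\varphi,n}f_{j}\|^{2}\le\varepsilon\,C_{n}$, and letting $\varepsilon\to 0$ gives $\|D_{\varphi,n}f_{j}\|\to 0$, so $D_{\varphi,n}$ is compact.

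This argument introduces no new idea beyond the proof of Theorem~\ref{thm1}; the only points requiring care are that the constant in the tail estimate be independent of $j$ (it is, by the remark above) and the correct invocation of the sequential compactness criterion on $H^{2}$. I expect the only mild obstacle to be the bookkeeping in the sufficiency direction, namely keeping the three pieces uniformly controlled along the sequence.
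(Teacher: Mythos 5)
Your proposal is correct and follows essentially the same route as the paper: necessity via the normalized reproducing kernels and the lower bound \eqref{2.22}, sufficiency by splitting $\|D_{\varphi,n}f\|^{2}$ as in \eqref{2.2} and reusing the estimates \eqref{eq3}--\eqref{eq6} with the little-$o$ hypothesis supplying the small factor on $\mathbb{D}\setminus R\mathbb{D}$. The only (cosmetic) difference is that you invoke the standard criterion on sequences tending to $0$ uniformly on compacta directly, with a clean $\limsup$-then-$\varepsilon\to 0$ bookkeeping, whereas the paper extracts a normal-families subsequence from an arbitrary bounded sequence and applies the same estimates to the differences $g_{m_{k}}=f_{m_{k}}-f$.
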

\begin{proof}
Let $h_{m}(z)=\frac{\sqrt{1-|\lambda_{m}|^{2}}}{1-\overline{\lambda}_{m}z}$  for a  sequence $\{\lambda_{m}\}$ in $\mathbb{D}$   so that  $|\lambda_{m}|\rightarrow 1$ as $m\rightarrow \infty$.  Then $h_{m}\rightarrow 0$ weakly as  $m\rightarrow \infty$  by \cite[Theorem 2.17]{2}. First suppose that $D_{\varphi ,n}$ is  compact. Hence $\|D_{\varphi , n}h_{m}\|\rightarrow 0$ as $m\rightarrow \infty$.  Therefore (\ref{2.22})  shows that
\begin{align*}
\lim\limits_{m\rightarrow \infty}\frac{\big((n+1)!\big)^{2}|\lambda_{m}|^{2n+2}}{2^{2n+1}(1-|\lambda_{m}|^{2})^{2n+1}}N_{\varphi}(\lambda_{m})=0.
\end{align*}
Since $\log(1/ |\lambda_{m}|)$ is comparable to $1-|\lambda_{m}|$ as $ m\rightarrow \infty$, the result follows.

Conversely,  suppose that  \eqref{eq19} holds. Let $\epsilon>0$. Then there exists $R, 0<R<1$, such that
\begin{align}\label{2020}
\underset{R<|w|<1}{\sup}N_{\varphi}(w)/\big[\log(1/|w|)\big]^{2n+1}<\epsilon.
\end{align}
Let $\{f_{m}\}$ be any  bounded sequence  in $H^{2}$. By  using  the idea which was stated  in  the proof of \cite[Proposition 3.11]{2}, we can see  that $\{f_{m}\}$ is a  normal family and there  exists  a subsequence $\{f_{m_{k}}\}$ which  converges
 to  some function $f\in  H^{2}$
 uniformly  on all  compact subsets  of $\mathbb{D}$.  Let $g_{m_{k}}=f_{m_{k}}-f$ for each positive integer $k$.  Note that $\{g_{m_{k}}\}$ is a bounded sequence in $H^{2}$ which   converges to $0$ uniformly on all compact subsets  of $\mathbb{D}$.
By \eqref{2.2}, we  obtain
\begin{align}
\|D_{\varphi ,n}g_{m_{k}}\|^{2}&=\big |g_{m_{k}}^{(n)}(\varphi(0))\big|^{2}+2\int_{R\mathbb{D}}\big |g_{m_{k}}^{(n+1)}(w)\big |^{2}N_{\varphi}(w)dA(w)\notag\\
&+2\int_{\mathbb{D}\setminus R\mathbb{D}}\big |g_{m_{k}}^{(n+1)}(w)\big|^{2}N_{\varphi}(w)dA(w).\label{eq2.20}
\end{align}
%We can see that
%\begin{align}\label{eq2.21}
%\bigg |g_{m_{k}}^{(n)}(\varphi(0))\bigg |^{2}\rightarrow 0
%\end{align}
%as $k\rightarrow \infty$ by \cite[Theorem 2.1, p .151]{34}.
%Moreover, again
By \cite[Theorem 2.1, p. 151]{34}, % (see Little wood's Inequality).
 we can choose $k_{\epsilon}$ so that
 \begin{align}
 \big |g_{m_{k}}^{(n)}\big(\varphi(0)\big)\big|<\sqrt{\epsilon}\label{eq2.200}
  \end{align}
  and  $\big |g_{m_{k}}^{(n+1)}\big|<\sqrt{\epsilon}$ on $R\mathbb{D}$
 whenever $k>k_{\epsilon}$. Substituting $f(z)=z$ into  \eqref{1-1}, we see that
\begin{align}\label{eq2.22}
\int_{R\mathbb{D}}\big |g_{m_{k}}^{(n+1)}(w)\big |^{2}N_{\varphi}(w)dA(w)&\leq \epsilon \int_{R\mathbb{D}} N_{\varphi}(w)dA(w)\notag\\
&\leq \frac{\epsilon}{2}\big(\|\varphi \|^{2}-|\varphi(0)|^{2}\big)
\end{align}
for $k>k_{\epsilon}$.
On  the other hand by \eqref{2020} and the same idea as stated in the proof of \eqref{eq3} and \eqref{eq6},  we see that
\begin{align}
&\int_{\mathbb{D}\setminus R\mathbb{D}}\big |g_{m_{k}}^{(n+1)}(w)\big |^{2}N_{\varphi}(w)dA(w)\notag\\
&\leq \underset{R<|w|<1}{\sup}\frac{N_{\varphi}(w)}{\big[\log (1/|w|)\big]^{2n+1}}\int_{\mathbb{D}\setminus R\mathbb{D}}\big |g_{m_{k}}^{(n+1)}(w)\big |^{2}\big[\log (1/|w|)\big]^{2n+1}dA(w)\notag\\
&\leq C \epsilon \|g_{m_{k}}\|,\label{eq2.23}
\end{align}
where $C$ is a constant.
Hence we conclude that $\|D_{\varphi ,n}g_{m_{k}}\|$ converges to zero as $k\rightarrow \infty$ by  \eqref{eq2.20}, \eqref{eq2.200}, \eqref{eq2.22} and \eqref{eq2.23}  and so $D_{\varphi,n}$ is compact.
\end{proof}

The preceding theorems lead to characterizations of all  bounded and compact operators $D_{\varphi ,n}$ when $\varphi$  is a univalent self-map.

\begin{cor}
Let $\varphi$ be a univalent   self-map of $\mathbb{D}$  and $n$ be a positive integer. Then the following hold.
\begin{itemize}
\item[(i)]
$D_{\varphi ,n}$ is bounded  on $H^{2}$ if and only if
\begin{align*}
\sup_{w\in \mathbb{D}}\frac{1-|w|}{\big(1-|\varphi(w)|\big)^{2n+1}}<\infty
\end{align*}
\item[(ii)]
$D_{\varphi ,n}$ is compact on $H^{2}$ if  and  only if
\begin{align*}
\lim\limits_{|w|\rightarrow 1}\frac{1-|w|}{\big(1-|\varphi(w)|\big)^{2n+1}}=0.
\end{align*}
\end{itemize}
\end{cor}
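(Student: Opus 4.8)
The plan is to deduce both parts directly from Theorems \ref{thm1} and \ref{thm2}, using the fact that univalence makes the Nevanlinna counting function completely explicit. Since $\varphi$ is one-to-one, for every $w\in\varphi(\mathbb{D})\setminus\{\varphi(0)\}$ the equation $\varphi(z)=w$ has the single solution $z=\varphi^{-1}(w)$, so $N_{\varphi}(w)=\log\big(1/|\varphi^{-1}(w)|\big)$, while $N_{\varphi}(w)=0$ for $w\notin\varphi(\mathbb{D})$. Consequently the growth hypothesis on $N_{\varphi}$ in Theorem \ref{thm1} (resp. Theorem \ref{thm2}) is automatic off $\varphi(\mathbb{D})$ and, along $w=\varphi(z)$, becomes $\log(1/|z|)=O\big([\log(1/|\varphi(z)|)]^{2n+1}\big)$ (resp. $o$). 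What remains is to translate this into the quotient $\tfrac{1-|z|}{(1-|\varphi(z)|)^{2n+1}}$ appearing in the statement, and to pass freely between a neighbourhood of the boundary and all of $\mathbb{D}$.

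The technical input I would isolate first is twofold. (a) The elementary inequalities $1-x\le\log(1/x)\le(1-x)/x$ for $x\in(0,1]$; in particular $\log(1/x)\ge 1-x$ always, and $\log(1/x)\le 2(1-x)$ once $x\ge 1/2$. (b) The observation that if $w=\varphi(z)$ and $|w|\to 1$ then $|z|\to 1$: otherwise $z$ would remain in some compact disk $\overline{D(0,\rho)}\subset\mathbb{D}$, forcing $\varphi(z)$ into the compact set $\varphi(\overline{D(0,\rho)})\subset\mathbb{D}$, contradicting $|w|\to1$. With (a) and (b) in hand, near the boundary $\log(1/|z|)$ is bounded above by $2(1-|z|)$, $(1-|w|)^{2n+1}\le[\log(1/|w|)]^{2n+1}$, and $\log(1/|\varphi(z)|)\le(1-|\varphi(z)|)/(1-\delta)$ whenever $|\varphi(z)|>1-\delta$.

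For part (i): if $D_{\varphi,n}$ is bounded, Theorem \ref{thm1} furnishes $C$ and $\delta\in(0,1)$ with $N_{\varphi}(w)\le C[\log(1/|w|)]^{2n+1}$ for $1-\delta<|w|<1$. I would then split $z\in\mathbb{D}$ into two cases: when $|\varphi(z)|\le 1-\delta$ the quotient $\tfrac{1-|z|}{(1-|\varphi(z)|)^{2n+1}}$ is at most $\delta^{-(2n+1)}$ trivially; when $|\varphi(z)|>1-\delta$ one applies the displayed bound with $w=\varphi(z)$ to get $1-|z|\le\log(1/|z|)\le C[\log(1/|\varphi(z)|)]^{2n+1}\le \tfrac{C}{(1-\delta)^{2n+1}}(1-|\varphi(z)|)^{2n+1}$, so the quotient is at most $C(1-\delta)^{-(2n+1)}$. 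Taking the maximum of the two bounds gives $\sup_{z\in\mathbb{D}}\tfrac{1-|z|}{(1-|\varphi(z)|)^{2n+1}}<\infty$. Conversely, if that supremum equals $M<\infty$, then for $w=\varphi(z)$ with $|w|$ close to $1$ we have $|z|$ close to $1$ by (b), whence $N_{\varphi}(w)=\log(1/|z|)\le 2(1-|z|)\le 2M(1-|w|)^{2n+1}\le 2M[\log(1/|w|)]^{2n+1}$; together with $N_{\varphi}\equiv 0$ off $\varphi(\mathbb{D})$ this is exactly the hypothesis of Theorem \ref{thm1}.

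For part (ii) the argument is the same with ``$O$'' replaced by ``$o$''. If $D_{\varphi,n}$ is compact, then $N_{\varphi}(w)/[\log(1/|w|)]^{2n+1}\to 0$; given $|z_k|\to1$ I would extract a subsequence along which $|\varphi(z_k)|$ converges, and if the limit is $1$ the chain $\tfrac{1-|z_k|}{(1-|\varphi(z_k)|)^{2n+1}}\le\tfrac{\log(1/|z_k|)}{[\log(1/|\varphi(z_k)|)]^{2n+1}}=\tfrac{N_{\varphi}(\varphi(z_k))}{[\log(1/|\varphi(z_k)|)]^{2n+1}}\to0$, while if the limit is $<1$ the quotient is bounded by a constant times $1-|z_k|\to0$; a standard subsequence argument then gives the limit $0$. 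Conversely, if $\tfrac{1-|z|}{(1-|\varphi(z)|)^{2n+1}}\to0$, then for $w=\varphi(z)$ with $|w|\to1$ (hence $|z|\to1$) one has $\tfrac{N_{\varphi}(w)}{[\log(1/|w|)]^{2n+1}}\le\tfrac{2(1-|z|)}{(1-|w|)^{2n+1}}\to0$, so \eqref{eq19} holds and Theorem \ref{thm2} applies. The only point demanding any care is the bookkeeping of the boundary/interior split for $|\varphi(z)|$ and the verification that $|z|\to1$ whenever $|w|=|\varphi(z)|\to1$; everything else is the two elementary log inequalities.
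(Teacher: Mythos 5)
Your proof is correct and takes essentially the same route as the paper: univalence gives $N_{\varphi}(w)=\log\big(1/|\varphi^{-1}(w)|\big)$, and the comparability of $\log(1/x)$ with $1-x$ near $x=1$, together with the fact that $|z|\to 1$ whenever $|\varphi(z)|\to 1$, reduces both parts to Theorems \ref{thm1} and \ref{thm2}; you merely supply the boundary/interior bookkeeping that the paper leaves implicit. The only slip is cosmetic: the chain $\tfrac{1-|z_k|}{(1-|\varphi(z_k)|)^{2n+1}}\le\tfrac{\log(1/|z_k|)}{[\log(1/|\varphi(z_k)|)]^{2n+1}}$ is not literally valid in the denominator (since $\log(1/x)\ge 1-x$ goes the wrong way there) and holds only up to the factor $|\varphi(z_k)|^{-(2n+1)}\to 1$, which of course does not affect the $o(\cdot)$ conclusion.
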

\begin{proof}
Since  $\varphi$ is univalent, we can see that
$N_{\varphi}(w)=\log \big(1/|z|\big)$,
where $\varphi(z)=w$. We observe  that
\[\frac{N_{\varphi}(w)}{\big[\log(1/|w|)\big]^{2n+1}}=\frac{-\log \big(|z|\big)}{\big(-\log \big(|\varphi(z)|\big)\big)^{2n+1}}.\]
Moreover, we know that $\log\big(1/|z|\big)$ is comparable to $1-|z|$ as $|z|\rightarrow 1^{-}$. Furthermore $|z|\rightarrow 1$ as $|\varphi(z)|\rightarrow 1$. Therefore the results  follow immediately from Theorems \ref{thm1} and \ref{thm2}.
\end{proof}

\section{Hilbert-Schmidt operator $D_{\varphi,n}$}
We begin with a few easy observations that help us in the proof of Theorem \ref{thm3333}. In the proof of the following lemma, we assume that $0^{0}=1$.

\begin{lem}\label{lm1}
Let  $n$ be a positive integer and $\alpha_{k}>0$  for each $0\leq k\leq n$. Then  for $0\leq x< 1$, the following statements hold.

(a)    $ \sum_{k=0}^{n}\frac{\alpha_{k}x^{k}}{(1-x)^{n+k+1}}\leq \frac{\sum_{k=0}^{n}\alpha_{k}}{(1-x)^{2n+1}}.$

(b)  There exists a positive number $\beta$  such that
$\sum_{k=0}^{n}\frac{\alpha_{k}x^{k}}{(1-x)^{n+k+1}} \geq\frac{\beta}{(1-x)^{2n+1}} $.

\end{lem}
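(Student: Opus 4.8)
The plan is to prove both parts by purely elementary estimates on the finite sum, treating $x \in [0,1)$ as fixed and exploiting the convention $0^0 = 1$ to handle the endpoint $x = 0$ uniformly.

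For part (a), I would observe that each term satisfies $\dfrac{\alpha_k x^k}{(1-x)^{n+k+1}} \le \dfrac{\alpha_k}{(1-x)^{n+k+1}}$ since $0 \le x^k \le 1$, and then note that $(1-x)^{n+k+1} \ge (1-x)^{2n+1}$ for $0 \le k \le n$ because $n+k+1 \le 2n+1$ and $0 < 1-x \le 1$, so that $\dfrac{1}{(1-x)^{n+k+1}} \le \dfrac{1}{(1-x)^{2n+1}}$. Summing over $k$ from $0$ to $n$ gives the claimed bound with constant $\sum_{k=0}^n \alpha_k$; at $x=0$ both sides equal $\sum_{k=0}^n \alpha_k$ (using $0^0=1$), so the inequality holds there too. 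This direction is routine and carries no real obstacle.

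For part (b), the sum is a continuous positive function of $x$ on $[0,1)$ once we fix the value at $x = 0$ via $0^0 = 1$, and I want to bound it below by $\beta/(1-x)^{2n+1}$. The key is to compare the whole sum with its single dominant term, the $k = n$ term, which equals $\dfrac{\alpha_n x^n}{(1-x)^{2n+1}}$; all other terms are nonnegative, so $\sum_{k=0}^n \dfrac{\alpha_k x^k}{(1-x)^{n+k+1}} \ge \dfrac{\alpha_n x^n}{(1-x)^{2n+1}}$. This already gives the bound with $\beta = \alpha_n x^n$, but that "constant" degenerates to $0$ as $x \to 0^+$, so I cannot simply take $\beta = \alpha_n$. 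To fix the behavior near $x = 0$, I would instead use the $k = 0$ term, $\dfrac{\alpha_0}{(1-x)^{n+1}}$, which is bounded below by $\alpha_0$ on all of $[0,1)$; multiplying the desired inequality through by $(1-x)^{2n+1}$, it suffices to show $g(x) := (1-x)^{2n+1}\sum_{k=0}^n \dfrac{\alpha_k x^k}{(1-x)^{n+k+1}} = \sum_{k=0}^n \alpha_k x^k (1-x)^{n-k}$ is bounded below by a positive constant on $[0,1)$. This $g$ is a polynomial, hence continuous on the compact interval $[0,1]$, with $g(0) = \alpha_0 > 0$ and $g(1) = \alpha_n > 0$, and $g(x) \ge \alpha_n x^n > 0$ for $x \in (0,1)$ shows $g$ never vanishes on $(0,1)$ either; so $g$ attains a positive minimum $\beta > 0$ on $[0,1]$, which is the constant we want.

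The main (minor) obstacle is simply organizing the argument so that the constant $\beta$ does not silently depend on $x$ — the naive single-term comparison is not enough near $x = 0$, and the clean remedy is the reduction to the polynomial $g(x) = \sum_{k=0}^n \alpha_k x^k (1-x)^{n-k}$ and a compactness/extreme-value argument on $[0,1]$. One should double-check the endpoint conventions ($0^0 = 1$ at $x = 0$; and at $x = 1$ only the $k=n$ term of $g$ survives), but no delicate analysis is required beyond that.
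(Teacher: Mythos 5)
Your argument is correct and is essentially the paper's own proof: both parts reduce to the identity $\sum_{k=0}^{n}\frac{\alpha_{k}x^{k}}{(1-x)^{n+k+1}}=\frac{1}{(1-x)^{2n+1}}\sum_{k=0}^{n}\alpha_{k}x^{k}(1-x)^{n-k}$, with (a) following from $x^{k}(1-x)^{n-k}\le 1$ and (b) from the fact that the polynomial $g(x)=\sum_{k=0}^{n}\alpha_{k}x^{k}(1-x)^{n-k}$ is continuous and strictly positive on the compact interval $[0,1]$, hence bounded below by some $\beta>0$. Your version is in fact slightly more careful than the paper's at the endpoint $x=1$, where you explicitly note $g(1)=\alpha_{n}>0$.
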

\begin{proof}

(a) We  can see that
\begin{align*}
\sum_{k=0}^{n}\frac{\alpha_{k}x^{k}}{(1-x)^{n+k+1}}=\frac{\sum_{k=0}^{n}\alpha_{k}x^{k}(1-x)^{n-k}}{(1-x)^{2n+1}}.
\end{align*}
Since $0\leq x<1$ and $\alpha_{k}>0$, we conclude  that
$ \sum_{k=0}^{n}\alpha_{k}x^{k}(1-x)^{n-k}\leq \sum_{k=0}^{n}\alpha_{k}.$ Hence the conclusion follows.

(b) We have
\begin{align*}
(1-x)^{2n+1}\sum_{k=0}^{n}\frac{\alpha_{k}x^{k}}{(1-x)^{n+k+1}}=\sum_{k=0}^{n}\alpha_{k}x^{k}(1-x)^{n-k}>0.
\end{align*}
Since $\sum_{k=0}^{n}\alpha_{k}x^{k}(1-x)^{n-k}$ is  a continuous function on  $[0,1]$, there exists  a positive number $\beta$ such  that  $\sum_{k=0}^{n}\alpha_{k}x^{k}(1-x)^{n-k}\geq \beta$.
Hence the result follows.

\end{proof}
\begin{lem}\label{lm3}
Let $n$ be a positive  integer. Then
\begin{align*}
\sum_{m=n}^{\infty}\big[m(m-1)...(m-n+1)\big]^{2}x^{m-n}
&=\big(n!\big)^{2}\sum_{k=0}^{n}\frac{(n+k)!}{\big(k!\big)^{2}(n-k)!}\frac{x^{k}}{(1-x)^{n+k+1}}
\end{align*}
for $0\leq x<1$.
\end{lem}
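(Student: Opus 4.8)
The plan is to prove the identity by recognizing the left-hand side as a derivative-type operation applied to the geometric series and then expanding in closed form. First I would observe that for $0\le x<1$,
\[
\sum_{m=n}^{\infty}x^{m-n}=\frac{1}{1-x},
\]
and that differentiating $n$ times term-by-term gives $\sum_{m=n}^{\infty}m(m-1)\cdots(m-n+1)x^{m-n}=\frac{n!}{(1-x)^{n+1}}$. However, the left side here has the product $m(m-1)\cdots(m-n+1)$ \emph{squared}, so a single application of this idea is not enough; the natural move is to write one factor of $m(m-1)\cdots(m-n+1)$ as the coefficient produced by $x^{n}\frac{d^{n}}{dx^{n}}$ acting on $x^{m-n}$ (up to reindexing), leaving the other factor inside. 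Concretely, I would start from $g(x):=\sum_{m=n}^{\infty}m(m-1)\cdots(m-n+1)x^{m}=x^{n}\sum_{m=n}^{\infty}m(m-1)\cdots(m-n+1)x^{m-n}=\frac{n!\,x^{n}}{(1-x)^{n+1}}$ and then apply $\frac{d^{n}}{dx^{n}}$ to $g$, since each term of $g$ differentiated $n$ times contributes $m(m-1)\cdots(m-n+1)\cdot x^{m-n}$ times the \emph{same} falling factorial, yielding exactly $\sum_{m=n}^{\infty}[m(m-1)\cdots(m-n+1)]^{2}x^{m-n}$.

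So the key identity to establish is
\[
\sum_{m=n}^{\infty}\big[m(m-1)\cdots(m-n+1)\big]^{2}x^{m-n}=\frac{d^{n}}{dx^{n}}\!\left(\frac{n!\,x^{n}}{(1-x)^{n+1}}\right),
\]
and then the task reduces to computing this $n$-th derivative in closed form and matching it to $(n!)^{2}\sum_{k=0}^{n}\frac{(n+k)!}{(k!)^{2}(n-k)!}\frac{x^{k}}{(1-x)^{n+k+1}}$. To evaluate the derivative I would use the Leibniz rule on the product $x^{n}\cdot(1-x)^{-(n+1)}$:
\[
\frac{d^{n}}{dx^{n}}\!\left(x^{n}(1-x)^{-(n+1)}\right)=\sum_{k=0}^{n}\binom{n}{k}\frac{d^{k}}{dx^{k}}\!\big(x^{n}\big)\cdot\frac{d^{n-k}}{dx^{n-k}}\!\big((1-x)^{-(n+1)}\big).
\]
Here $\frac{d^{k}}{dx^{k}}(x^{n})=\frac{n!}{(n-k)!}x^{n-k}$ and $\frac{d^{n-k}}{dx^{n-k}}(1-x)^{-(n+1)}=\frac{(2n-k)!}{n!}(1-x)^{-(2n-k+1)}$ (the sign $(-1)^{n-k}$ from the chain rule cancels against the $(-1)^{n-k}$ coming from differentiating $(1-x)$). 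Substituting and multiplying by the overall $n!$ gives, after collecting, a sum of the form $\sum_{k=0}^{n}(\text{coefficient})\,x^{n-k}(1-x)^{-(2n-k+1)}$; reindexing $k\mapsto n-k$ should put it in the stated shape with $x^{k}$ in the numerator and $(1-x)^{n+k+1}$ in the denominator, and the binomial/factorial bookkeeping should collapse the coefficient to $(n!)^{2}\frac{(n+k)!}{(k!)^{2}(n-k)!}$.

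The main obstacle I anticipate is purely the combinatorial bookkeeping: verifying that after the Leibniz expansion, the substitution of the two derivative formulas, the overall factor of $n!$, and the reindexing, the resulting coefficient of $\frac{x^{k}}{(1-x)^{n+k+1}}$ is exactly $(n!)^{2}\frac{(n+k)!}{(k!)^{2}(n-k)!}$. This is a finite computation with binomial coefficients and factorials — e.g. checking that $n!\cdot\binom{n}{n-k}\cdot\frac{n!}{k!}\cdot\frac{(n+k)!}{n!}$ simplifies correctly — but it is the step where an off-by-one or a misplaced factorial is easiest to commit, so I would double-check it by testing the cases $n=1$ and $n=2$ against direct computation. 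A secondary minor point is justifying term-by-term differentiation of the power series, which is standard since the series converges locally uniformly on $[0,1)$ (it is a power series with radius of convergence $1$), so differentiation inside the sum is legitimate on any $[0,r]$ with $r<1$, hence on all of $[0,1)$.
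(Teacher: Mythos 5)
Your proposal is correct and follows essentially the same route as the paper, which simply cites Stevi\'c's Lemma 1 (the base identity $\sum_{m=n}^{\infty}m(m-1)\cdots(m-n+1)x^{m-n}=n!/(1-x)^{n+1}$) together with the general Leibniz rule applied to $x^{n}(1-x)^{-(n+1)}$. Your coefficient bookkeeping is right: $n!\cdot\binom{n}{n-k}\cdot\frac{n!}{k!}\cdot\frac{(n+k)!}{n!}=(n!)^{2}\frac{(n+k)!}{(k!)^{2}(n-k)!}$, as confirmed by the $n=1$ check.
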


\begin{proof}
See \cite[Lemma 1]{15} and   the general Leibniz rule.
\end{proof}

 A \textit{Hilbert–Schmidt operator} on a separable  Hilbert space $H$ is a bounded operator $A$  with finite \textit{Hilbert–Schmidt norm}
    $\|A\|_{HS}=\left(\sum_{n=1}^{\infty}\|A e_{n}\|^{2}\right)^{1/2}$,
where $\{e_{n }\}$ is an orthonormal basis of $H$.  These definitions are independent of the choice of the basis (see \cite[Theorem 3.23]{2}).

\begin{thm} \label{thm3333}
Let $D_{\varphi,n}$ be  a bounded  operator on $H^{2}$. Then $D_{\varphi,n}$ is a Hilbert-Schmidt  operator on $H^{2}$ if and only if
\begin{align}\label{3.I}
\lim\limits_{r\rightarrow 1}\frac{1}{2\pi}\int_{0}^{2\pi}\frac{1}{\big(1-\big|\varphi(re^{i\theta})\big|^{2}\big)^{2n+1}}<\infty .
\end{align}
\end{thm}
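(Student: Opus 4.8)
The plan is to compute $\|D_{\varphi,n}\|_{HS}^2$ directly from the monomial orthonormal basis $\{z^m\}_{m\ge 0}$ of $H^2$ and then to collapse the resulting series into the integral in \eqref{3.I} by means of Lemmas \ref{lm3} and \ref{lm1}. Since $D_{\varphi,n}(z^m)=(z^m)^{(n)}\circ\varphi=m(m-1)\cdots(m-n+1)\,\varphi^{\,m-n}$ for $m\ge n$ and $D_{\varphi,n}(z^m)=0$ for $m<n$, we obtain
\[
\|D_{\varphi,n}\|_{HS}^2=\sum_{m=n}^{\infty}\big[m(m-1)\cdots(m-n+1)\big]^{2}\,\big\|\varphi^{\,m-n}\big\|^{2}.
\]
Because $\varphi$ is a self-map of $\mathbb{D}$, each power $\varphi^{\,j}$ belongs to $H^{\infty}\subset H^{2}$, and by Parseval on the circle of radius $r$ the integral means $I_j(r):=\frac{1}{2\pi}\int_{0}^{2\pi}\big|\varphi(re^{i\theta})\big|^{2j}\,d\theta$ are nondecreasing in $r$ with $\big\|\varphi^{\,j}\big\|^{2}=\lim_{r\to1}I_j(r)=\sup_{0<r<1}I_j(r)$.

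The step I would carry out with care is the interchange of the sum over $m$ with the limit in $r$ and with the $\theta$-integral. For fixed $r$, Tonelli moves the (nonnegative) sum inside $\int_{0}^{2\pi}$, and since each $I_{m-n}(\cdot)$ is nondecreasing, for a nonnegative family of nondecreasing functions the supremum of the sum equals the sum of the suprema; hence
\[
\|D_{\varphi,n}\|_{HS}^2=\lim_{r\to1}\frac{1}{2\pi}\int_{0}^{2\pi}\Bigg(\sum_{m=n}^{\infty}\big[m(m-1)\cdots(m-n+1)\big]^{2}\,\big|\varphi(re^{i\theta})\big|^{2(m-n)}\Bigg)d\theta .
\]
For every $re^{i\theta}\in\mathbb{D}$ we have $x:=\big|\varphi(re^{i\theta})\big|^{2}\in[0,1)$, so Lemma \ref{lm3} rewrites the inner series as $(n!)^{2}\sum_{k=0}^{n}\frac{(n+k)!}{(k!)^{2}(n-k)!}\,\frac{x^{k}}{(1-x)^{n+k+1}}$.

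Next I would apply Lemma \ref{lm1} with $\alpha_k=(n!)^{2}\frac{(n+k)!}{(k!)^{2}(n-k)!}>0$: part (a) bounds this quantity above by $\big(\sum_{k=0}^{n}\alpha_k\big)(1-x)^{-(2n+1)}$, and part (b) bounds it below by $\beta\,(1-x)^{-(2n+1)}$ for some $\beta>0$. Substituting $x=\big|\varphi(re^{i\theta})\big|^{2}$, integrating over $[0,2\pi]$, and letting $r\to1$, these two-sided estimates show that $\|D_{\varphi,n}\|_{HS}^{2}$ is finite if and only if
\[
\lim_{r\to1}\frac{1}{2\pi}\int_{0}^{2\pi}\frac{d\theta}{\big(1-|\varphi(re^{i\theta})|^{2}\big)^{2n+1}}<\infty .
\]
To see that this limit is well posed, I would expand $(1-x)^{-(2n+1)}=\sum_{k\ge0}\binom{2n+k}{k}x^{k}$, which has nonnegative coefficients, so $r\mapsto\frac{1}{2\pi}\int_{0}^{2\pi}\big(1-|\varphi(re^{i\theta})|^{2}\big)^{-(2n+1)}\,d\theta$ is a sum of the nondecreasing integral means $I_k(r)$, hence itself nondecreasing, and its limit coincides with its supremum — which is precisely condition \eqref{3.I}.

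The explicit constants are routine; the one genuinely delicate point is the interchange of the infinite sum with $\lim_{r\to1}$ and the $\theta$-integral, so I would organize the write-up around the monotonicity of the integral means of the subharmonic functions $|\varphi|^{2j}$ (and of $(1-|\varphi|^{2})^{-(2n+1)}$) as the technical backbone.
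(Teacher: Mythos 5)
Your proposal is correct and follows essentially the same route as the paper: expand $\|D_{\varphi,n}\|_{HS}^2$ over the monomial basis, collapse the resulting series via Lemma \ref{lm3}, and obtain the two-sided bound $\beta(1-x)^{-(2n+1)}\le (n!)^2\sum_k \frac{(n+k)!}{(k!)^2(n-k)!}\frac{x^k}{(1-x)^{n+k+1}}\le \alpha(1-x)^{-(2n+1)}$ from Lemma \ref{lm1}, with the sum/limit/integral interchange justified by monotone convergence. Your explicit justification of the interchange via monotonicity of the integral means, and your remark that the limit in \eqref{3.I} exists as a supremum, are slightly more careful than the paper's citation-based treatment but not a different argument.
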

\begin{proof}
Suppose  that \eqref{3.I} holds. Lemmas \ref{lm1}, \ref{lm3} and \cite[Theorem 1.27]{726} imply that
\begin{align}\label{3.II}
\sum_{m=0}^{\infty}\big\|D_{\varphi,n}z^{m}\big\| &=\sum_{m=n}^{\infty}\big\| m(m-1)...(m-n+1)\varphi^{m-n}\big\|\notag\\
&=\sum_{m=n}^{\infty}\lim\limits_{r\rightarrow 1}\frac{1}{2\pi}\int_{0}^{2\pi}\big|m(m-1)...(m-n+1)\varphi^{m-n}(re^{i\theta})\big|^{2}d\theta\notag\\
&=\lim\limits_{r\rightarrow 1}\sum_{m=n}^{\infty}\frac{1}{2\pi}\int_{0}^{2\pi}\big|m(m-1)...(m-n+1)\varphi^{m-n}(re^{i\theta})\big|^{2}d\theta\notag\\
&=\lim\limits_{r\rightarrow 1}\frac{1}{2\pi}\int_{0}^{2\pi}\sum_{m=n}^{\infty}\big|m(m-1)...(m-n+1)\varphi^{m-n}(re^{i\theta})\big|^{2}d\theta\notag\\
&=\lim\limits_{r\rightarrow 1}\frac{1}{2\pi}\int_{0}^{2\pi}\sum_{k=0}^{n} \frac{\big(n!\big)^{2}(n+k)!}{\big(k!\big)^{2}(n-k)!}\frac{\big|\varphi (re^{i\theta})\big |^{2k}}{\big(1-\big|\varphi(re^{i\theta})\big|^{2}\big)^{n+k+1}} \notag\\
&\leq \lim\limits_{r\rightarrow 1}\frac{1}{2\pi}\int_{0}^{2\pi}\frac{\alpha}{\big(1-\big|\varphi(re^{i\theta})\big|^{2}\big)^{2n+1}},
\end{align}
where $\alpha=\sum_{k=0}^{n}\frac{(n!)^{2}(n+k)!}{(k!)^{2}(n-k)!}$
%(note that if $f_{r}(m)=\frac{1}{2\pi}\int_{0}^{2\pi}\big|m(m-1)...(m-n+1)\varphi^{m-n}(re^{i\theta})\big|^{2}d\theta$ and for each positive integer $p$ that $0\leq t_{p}<1$ with $\lim\limits_{p\rightarrow \infty}r_{p}=1$, then $\{f_{r_{p}}\}$ holds  the hypo theses of Lemma \ref{lm2}.)
(note that the interchange of limit and summation  is  justified by \cite[Corollary 2.23]{2}  and  using  Lebesgue's Monotone  Convergence  Theorem with counting  measure).
It follows  that $\sum_{m=0}^{\infty}\big\| D_{\varphi,n}z^{m}\big\|<\infty$ and  so  $D_{\varphi,n}$ is a Hilbert-Schmidt  operator on $H^{2}$ by \cite[Theorem 3.23]{2}.

Conversely, suppose that $D_{\varphi,n}$ is a Hilbert-Schmidt operator  on $H^{2}$. We infer from  \cite[Theorem 3.23]{2} that
\begin{align}\label{3.}
 \sum_{m=0}^{\infty}\big\|D_{\varphi,n}z^{m}\big\|^{2}<\infty.
\end{align}
On  the other hand, by   the proof of (3.2) and Lemma  \ref{lm1},  there exists a positive  number $\beta$  such that

\begin{align}\label{3.*}
\sum_{m=0}^{\infty}\big\|D_{\varphi,n}z^{m}\big\|^{2}&=\lim\limits_{r\rightarrow 1}\frac{1}{2\pi}\int_{0}^{2\pi}\sum_{k=0}^{n}\frac{(n!)^{2}(n+k)!}{(k!)^{2}(n-k)!}\frac{\big|\varphi (re^{i\theta})\big|^{2k}}{\big(1-\big|\varphi(re^{i\theta})\big|^{2}\big)^{n+k+1}}\notag\\
&\geq \lim\limits_{r\rightarrow 1} \frac{1}{2\pi}\int_{0}^{2\pi}\frac{\beta}{\big(1-\big|\varphi(re^{i\theta})\big|^{2}\big)^{2n+1}}.
\end{align}
Hence  the result follows  from  \eqref{3.} and \eqref{3.*}.
%By \cite[Lemma 1]{15} with $\alpha=-1$ and $0\leq x$, we have
%\begin{align}\label{3.**}
%\sum_{m=n}^{\infty}[m(m-1)\cdots (m-n+1)]^{2}x^{m-n}=n!\bigg(\frac{x^{n}}{(1-x)^{n+1}}\bigg)^{(n)}.
%\end{align}
%By Using Leibnitz's  formula and some calculation, \eqref{3.**} equals
%\begin{align}\label{3.***}
%n!\sum_{k=0}^{n}\frac{(n!)^{2}(n+k)!}{(k!)^{2}(n-k)!}\frac{x^{k}}{(1-x)^{n+k+1}}.
%\end{align}
%Lemma \ref{lm1}, \eqref{3.}, \eqref{3.*} and \eqref{3.***} with $x=|\varphi (re^{i\theta})|^{2}$ imply  that
%\[\underset{0\leq r<1}{\sup}\frac{1}{2\pi}\int_{0}^{2\pi}\frac{(n!)^{2}}{(1-|\varphi(re^{i\theta})|^{2})^{2n+1}}d\theta <\infty.\]
\end{proof}

                                              %--------------------------------------------------------------------------
                                              %--------------------------------------------------------------------------
                                              %--------------------------------------------------------------------------

                                              %--------------------------------------------------------------------------
                                              %--------------------------------------------------------------------------
                                              %--------------------------------------------------------------------------
 \end{document}